\definecolor{webgreen}{rgb}{0,.5,0}
\definecolor{webbrown}{rgb}{.6,0,0}
\theoremstyle{definition}
\newtheorem{definition}{Definition}[section]
\newtheorem{observation}[definition]{Observation}
\newtheorem{example}[definition]{Example}
\theoremstyle{plain}
\newtheorem{theorem}[definition]{Theorem}
\newtheorem{corollary}[definition]{Corollary}
\newtheorem{lemma}[definition]{Lemma}
\newtheorem{conjecture}[definition]{Conjecture}
\theoremstyle{remark}
\newcommand{\snort}[0]{{\textsc{Snort}} }
\newcommand{\Snort}[0]{{\textsc{Snort}}}
\newcommand{\vertex}[2]{{\node[main] (#2) at #1 {$#2$};}}
\title{Degrees are Useless in \snort When Measuring Temperature}
\author{}
\date{}
\begin{document}

\begin{center}
  \vskip 1cm{\LARGE\bf
      Degrees are Useless in \snort When Measuring Temperature
    }
  \vskip 1cm
  Svenja Huntemann\footnote{This author's research is supported in part by the Natural Sciences and Engineering Research Council of Canada grant 2022-04273.\\
  \textit{Keywords:} Combinatorial game, Snort, Temperature\\
  \textit{MSC subject class:} 91A46}\\
  Department of Mathematics and Statistics\\
  Mount Saint Vincent University\\
  Halifax, NS~~B3M~2J6\\
  Canada\\
  \href{mailto:svenja.huntemann@msvu.ca}{\tt svenja.huntemann@msvu.ca} \\
  \ \\
  Tomasz Maciosowski \\
  Concordia University of Edmonton\\
  Canada\\
  \href{mailto:t.maciosowski@gmail.com}{\tt t.maciosowski@gmail.com} \\
\end{center}

\vskip .2 in

\begin{abstract}
  \snort is a two-player game played on a simple graph in which players alternately colour a vertex such that they do not colour adjacent to their opponents' vertex.
  In combinatorial game theory, the temperature of a position is a measure of the urgency of moving first.
  It is known that the temperature of \snort in general is infinite ($K_{1,n}$ has temperature $n$).
  We show that the temperature in addition can be infinitely larger than the degree of the board being played on.
  We do so by constructing a family of positions in which the temperature grows twice as fast as the degree of the board.
\end{abstract}

\section{Introduction}

In combinatorial game theory, the temperature of a component of a game represents how much a player will gain by going first in that component, assuming optimal play by both players. Positions in which the temperature is positive, i.e.\ those where the players want to move first, are called hot. In most cases, players want to move in a component with higher temperature. Thus, game players often search for positions of high temperature, whether actively, as for a lot of computer players, or implicitly, as many good human players do.

If a computer player randomly checks moves to decide which one to play, we will need to know when a ``good enough'' position has been found, i.e.\ a position which has temperature equal to or close to the maximum possible. Thus a question of interest is what the maximum temperature (called the boiling point) for all possible positions or a set of positions is. A recent survey by Berlekamp \cite{Berlekamp2019} discusses the temperature of many different games, including some known bounds.

The game \snort is played on any finite simple graph.
The two players, which we call Left and Right, alternately colour vertices, with Left colouring blue and Right in red.
Two adjacent vertices cannot be coloured differently and the game ends once the current player cannot make any move.
In that case, the current player loses.

For example, in \cref{fig:example-snort-game} a possible sequence of moves is shown where Left wins since Right on his turn cannot make a move.
The moves shown may not be optimal play.

\begin{figure}[H]
  \begin{center}
    \begin{tikzpicture}[vertex/.style={circle, draw, minimum size=5mm, font=\scriptsize},scale=0.75]
      \begin{scope}
        \node[vertex] (1) at (0,0) {};
        \node[vertex] (2) at (1,0) {} edge (1);
        \node[vertex] (3) at (2,0) {} edge (2);
        \node[vertex] (4) at (0,1) {};
        \node[vertex] (5) at (1,1) {} edge (4);
        \node[vertex] (6) at (2,1) {} edge (5);
        \node[vertex] (7) at (0,2) {};
        \node[vertex] (8) at (1,2) {} edge (7);
        \node[vertex] (9) at (2,2) {} edge (8);
        \draw (3) edge (6);
        \draw (5) edge (8);
      \end{scope}
      \draw[->] (2.5,1)--(4.5,1);
      \begin{scope}[shift={(5,0)}]
        \node[vertex] (1) at (0,0) {};
        \node[vertex] (2) at (1,0) {} edge (1);
        \node[vertex] (3) at (2,0) {} edge (2);
        \node[vertex] (4) at (0,1) {};
        \node[vertex] (5) at (1,1) {} edge (4);
        \node[vertex] (6) at (2,1) {} edge (5);
        \node[vertex] (7) at (0,2) {};
        \node[vertex,fill=blue] (8) at (1,2) {} edge (7);
        \node[vertex] (9) at (2,2) {} edge (8);
        \draw (3) edge (6);
        \draw (5) edge (8);
      \end{scope}
      \draw[->] (7.5,1)--(9.5,1);
      \begin{scope}[shift={(10,0)}]
        \node[vertex] (1) at (0,0) {};
        \node[vertex] (2) at (1,0) {} edge (1);
        \node[vertex] (3) at (2,0) {} edge (2);
        \node[vertex] (4) at (0,1) {};
        \node[vertex] (5) at (1,1) {} edge (4);
        \node[vertex,fill=red] (6) at (2,1) {} edge (5);
        \node[vertex] (7) at (0,2) {};
        \node[vertex,fill=blue] (8) at (1,2) {} edge (7);
        \node[vertex] (9) at (2,2) {} edge (8);
        \draw (3) edge (6);
        \draw (5) edge (8);
      \end{scope}
      \draw[->] (11,-0.5)--(11,-2.5);
      \begin{scope}[shift={(10,-5)}]
        \node[vertex] (1) at (0,0) {};
        \node[vertex,fill=blue] (2) at (1,0) {} edge (1);
        \node[vertex] (3) at (2,0) {} edge (2);
        \node[vertex] (4) at (0,1) {};
        \node[vertex] (5) at (1,1) {} edge (4);
        \node[vertex,fill=red] (6) at (2,1) {} edge (5);
        \node[vertex] (7) at (0,2) {};
        \node[vertex,fill=blue] (8) at (1,2) {} edge (7);
        \node[vertex] (9) at (2,2) {} edge (8);
        \draw (3) edge (6);
        \draw (5) edge (8);
      \end{scope}
      \draw[->] (9.5,-4) -- (7.5,-4);
      \begin{scope}[shift={(5,-5)}]
        \node[vertex] (1) at (0,0) {};
        \node[vertex,fill=blue] (2) at (1,0) {} edge (1);
        \node[vertex] (3) at (2,0) {} edge (2);
        \node[vertex,fill=red] (4) at (0,1) {};
        \node[vertex] (5) at (1,1) {} edge (4);
        \node[vertex,fill=red] (6) at (2,1) {} edge (5);
        \node[vertex] (7) at (0,2) {};
        \node[vertex,fill=blue] (8) at (1,2) {} edge (7);
        \node[vertex] (9) at (2,2) {} edge (8);
        \draw (3) edge (6);
        \draw (5) edge (8);
      \end{scope}
      \draw[->] (4.5,-4) -- (2.5,-4);
      \begin{scope}[shift={(0,-5)}]
        \node[vertex] (1) at (0,0) {};
        \node[vertex,fill=blue] (2) at (1,0) {} edge (1);
        \node[vertex] (3) at (2,0) {} edge (2);
        \node[vertex,fill=red] (4) at (0,1) {};
        \node[vertex] (5) at (1,1) {} edge (4);
        \node[vertex,fill=red] (6) at (2,1) {} edge (5);
        \node[vertex,fill=blue] (7) at (0,2) {};
        \node[vertex,fill=blue] (8) at (1,2) {} edge (7);
        \node[vertex] (9) at (2,2) {} edge (8);
        \draw (3) edge (6);
        \draw (5) edge (8);
      \end{scope}
    \end{tikzpicture}
  \end{center}
  \caption{Example \snort Game}
  \label{fig:example-snort-game}
\end{figure}

\snort is named after Simon Norton, who originally introduced it (see \cite{BerlekampCG2004}).
When played on grids, the game is also known as \textsc{Cats and Dogs} and is played by school children in Portugal as part of a national games tournament.

The game is interesting to play since most moves will both ``reserve'' vertices for the current player, and may also ``block'' vertices the opponent previously reserved.
\snort also naturally breaks into smaller, independent components as the game progresses and we can analyze the components to learn more about the overall game.

Since its introduction, \snort has been studied from various perspectives. Winning Ways \cite{BerlekampCG2004} contains a dictionary of values when playing on paths up to six vertices. It was shown that \snort is PSPACE-complete in \cite{Schaefer1978}. The authors in \cite{BrownCHMMNS2019} and \cite{HuntemannN2022} enumerated positions in \snort on a variety of boards using generating polynomials. The outcome class when playing on a variety of uncoloured graphs was studied in \cite{Arroyo1998} and \cite{Kakihara2010}. And \cite{HuntemannNP2021} gave a first upper bound on the temperature when playing \snort on paths, although the conjectured bound is significantly lower.

When playing \snort on $K_{1,4}$ as a component, the optimal move for either player is to move in the centre, thereby reserving the other four vertices for themselves, without the opponent being able to prevent future moves in them.
Since this results in four additional moves the current player can make at any time (for example if they wanted to force their opponent to make the next move in another component), we say that $K_{1,4}$ has temperature $4$.

It is known in general that playing \snort on $K_{1,n}$ has temperature $n$, thus the temperature in \snort is unbounded.

The positions analyzed in \cite{BerlekampCG2004} all satisfy $t(G)\leq\deg(G)$, and it was conjectured (see for example \cite{HuntemannPhD}) that this bound holds in general.
At the Virtual Combinatorial Games Workshop in 2020 a position was found that has $t(G)>\deg(G)$.
Several more such positions were found at the Combinatorial Game Theory Workshop in St.\ John's, Canada in 2022, all of which satisfied $t(G)-\deg(G)\leq 2$.
One example of such a position is shown in \cref{fig:example-initial-position}.

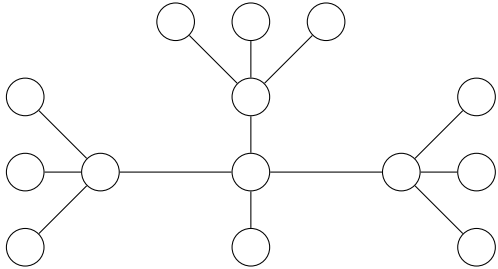
\begin{figure}[H]
  \begin{center}
    \begin{tikzpicture}[vertex/.style={circle, draw, minimum size=5mm, font=\scriptsize}]
      \node[vertex] (1) at (0,0) {};
      \node[vertex] (2) at (2,0) {} edge (1);
      \node[vertex] (3) at (4,0) {} edge (2);
      \node[vertex] (4) at (-1, 1) {} edge (1);
      \node[vertex] (5) at (-1, 0) {} edge (1);
      \node[vertex] (6) at (-1, -1) {} edge (1);
      \node[vertex] (7) at (2, 1) {} edge (2);
      \node[vertex] (8) at (2, -1) {} edge (2);
      \node[vertex] (9) at (1, 2) {} edge (7);
      \node[vertex] (10) at (2, 2) {} edge (7);
      \node[vertex] (11) at (3, 2) {} edge (7);
      \node[vertex] (12) at (5, 1) {} edge (3);
      \node[vertex] (13) at (5, 0) {} edge (3);
      \node[vertex] (14) at (5, -1) {} edge (3);
    \end{tikzpicture}
  \end{center}
  \caption{A \snort position with $t(G) - \deg(G) = \frac{3}{2}$}
  \label{fig:example-initial-position}
\end{figure}

We will show in this paper that the difference $t(G)-\deg(G)$ is unbounded.
We used a genetic algorithm with fitness function $t(G)-\deg(G)$, with the initial generation being the positions from the 2022 workshop, to find several positions with larger differences.
We then generalized these to a family of graphs where the temperature grows twice as fast as the degree.

\section{Game Values and Temperatures: Background}

In this section, we will give an informal introduction to game values and temperatures.
The reader familiar with combinatorial game theory can skip this section.
For more details, see for example \cite{siegel2013combinatorial}.

We use the notation $G = \{G^\mathcal{L} \mid G^\mathcal{R}\}$ to describe a game position where Left can move to any position in the set $G^\mathcal{L}$ and Right to any position in $G^\mathcal{L}$.
The game in which neither player has any options is denoted $0 = \{\; \mid \; \}$ and any game in which the first player to move loses is equal to $0$.
The game in which both players can move to 0 is denoted $* = \{0 \mid 0\}$.
The \emph{positive integers} are recursively constructed as $n = \{ n-1 \mid \;\}$, thus are games in which Left has $n$ consecutive moves and Right has none.
The \emph{negative} of a game is the game in which the roles for the two players are reversed, i.e.\ $-G = \{- G^\mathcal{R} \mid -G^\mathcal{L} \}$. In \Snort, this is equivalent to swapping the colours of any vertices already claimed.
The game in which Right's options are all negatives of Left's options is denoted $\pm\{G^\mathcal{L}\}=\{G^\mathcal{L}\mid-G^\mathcal{L}\}$, e.g.\ $\pm 1=\{1\mid -1\}$.

A game can be simplified into an equivalent game by removing \emph{dominated options}, those which a player would never play since another option is strictly better, and replacing \emph{reversible options} (which we do not need in this paper).
This simplified game is called the \emph{value} or the \emph{canonical form} of the game.

\medskip

The temperature of a game can be used to gauge in which disjoint component of a game to move, although there are some cases in which the best move is not always in the component with the highest temperature.
The temperature is a dyadic rational in the interval $[-1,\infty)$, with positive temperatures assigned to those games where a player gains by moving first, such as \snort on $K_{1,n}$.
In contrast, an integer has temperature $-1$ because the player removes one of their guaranteed moves when playing in it.

We can get the temperature of a game by constructing a thermograph.
We will describe how to construct the thermograph recursively from the options.
Note that the axes in a thermograph use the following convention: The vertical axis is for the independent variable, the amount by which the game is ``cooled'', equivalent to a penalty paid by the players to move first.
The horizontal axis is oriented with positive values on the left.
The left side of a thermograph represents the worth of the component to Left when paying the corresponding penalty, and similarly for the right side.
The point at which the two sides meet is the temperature of the game, after which a vertical mast is drawn.

For the simplest case, if the component is an integer, the thermograph is only a mast at that value and its temperature is $-1$ (see \cref{fig:thermograph-integer} for the thermograph of $G=3$).

\begin{figure}[H]
  \centering
  \begin{tikzpicture}
    \draw[<-] (0,0) -- (6,0);
    \draw[very thick, ->] (2, -1) -- (2, 3.5);
    \draw[->] (5, -1) -- (5, 4);
    \draw (4.8, -0.1) node[anchor=north] {$0$};
    \draw (1.8, -0.1) node[anchor=north] {$3$};
    \draw[dotted] (2,-1) -- (5,-1) node[anchor=west] {$t = -1$};
  \end{tikzpicture}
  \caption{Thermograph of $G = 3$}
  \label{fig:thermograph-integer}
\end{figure}
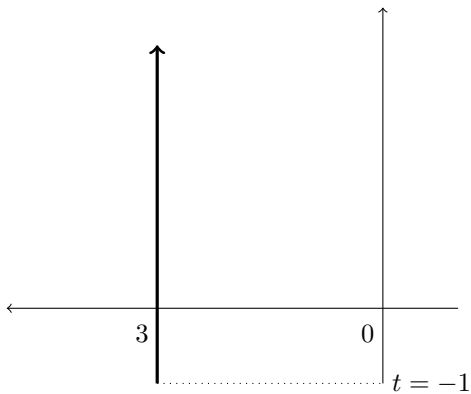

To construct thermographs of a position in which both players have one option, we use the thermographs of the options.
We will draw the thermographs of the options in the same diagram using dashed lines.
We use the right side of the thermograph of the Left option and sheer it by $45^\circ$ to the right, and the left side of the thermograph of the Right option and sheer it to the left.
At the intersection point the vertical mast starts.
See \cref{fig:thermograph-two-integers} for the thermograph of $G=\{2\mid -1\}$ and \cref{fig:thermograph-complex} for the thermograph of $G=\big\{\{2\mid -1\}\mid\{-4\mid -10\}\big\}$, for the latter of which the thermographs of the options were found recursively.

\begin{figure}[H]
  \centering
  \begin{tikzpicture}
    \draw[<-] (-4,0) -- (3,0);

    \draw[->] (0, -1) -- (0, 4);
    \draw (-0.2, -0.1) node[anchor=north] {$0$};

    \draw[dashed,->] (-2, -1) -- (-2, 3.5);
    \draw (-1.8, -0.1) node[anchor=north] {$2$};

    \draw[dashed,->] (1, -1) -- (1, 3.5);
    \draw (0.75, -0.1) node[anchor=north] {$-1$};

    \draw[very thick] (-3, -1) -- (-2, 0) -- (-1/2, 3/2);
    \draw[very thick] (2, -1) -- (1, 0) -- (-1/2, 3/2);
    \draw[very thick, ->] (-1/2, 3/2) -- (-1/2, 3.5);

    \draw[dotted] (-1/2, 3/2) -- (0, 3/2) node[anchor=west] {$t = \frac{3}{2}$};
  \end{tikzpicture}
  \caption{Thermograph of $G = \{2 \mid -1 \}$}
  \label{fig:thermograph-two-integers}
\end{figure}
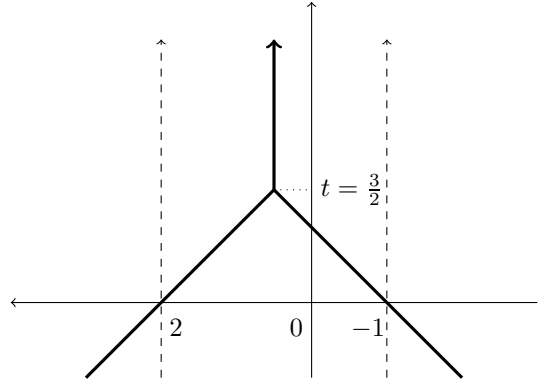

\begin{figure}[H]
  \centering
  \begin{tikzpicture}[scale=0.75]
    \draw[<-] (-3,0) -- (11,0);

    \draw[->] (0, -1) -- (0, 5);
    \draw (-0.2, -0.1) node[anchor=north] {$0$};

    \draw[dashed] (-3, -1) -- (-2, 0) -- (-1/2, 3/2);
    \draw[dashed] (2, -1) -- (1, 0) -- (-1/2, 3/2);
    \draw[dashed, ->] (-1/2, 3/2) -- (-1/2, 4.5);
    \draw (-1.8, -0.1) node[anchor=north] {$2$};
    \draw (0.65, -0.1) node[anchor=north] {$-1$};

    \draw[dashed] (3, -1) -- (4, 0) -- (7, 3);
    \draw[dashed] (11, -1) -- (10, 0) -- (7, 3);
    \draw[dashed, ->] (7, 3) -- (7, 4.5);
    \draw (4, -0.1) node[anchor=north] {$-4$};
    \draw (9.75, -0.1) node[anchor=north] {$-10$};

    \draw[very thick, ->] (1, -1) -- (1, 3/2) -- (13/4, 15/4) -- (13/4, 4.5);
    \draw[very thick] (4, -1) -- (4, 3) -- (13/4, 15/4);
  \end{tikzpicture}
  \caption{Thermograph of $G=\big\{\left\{2 \mid -1\right\} \mid \left\{-4 \mid -10\right\}\big\}$}
  \label{fig:thermograph-complex}
\end{figure}

When the players have multiple options, to find the left side, we overlay the thermographs for all Left options and choose the left-most point of the right sides for any value of the independent variable, then again sheer to the right.
And similarly for the right side. As an example, the thermograph of $G=\big\{-1,\{2\mid-2\}\mid -8\big\}$ is shown in \cref{fig:thermograph-multiple-options}.

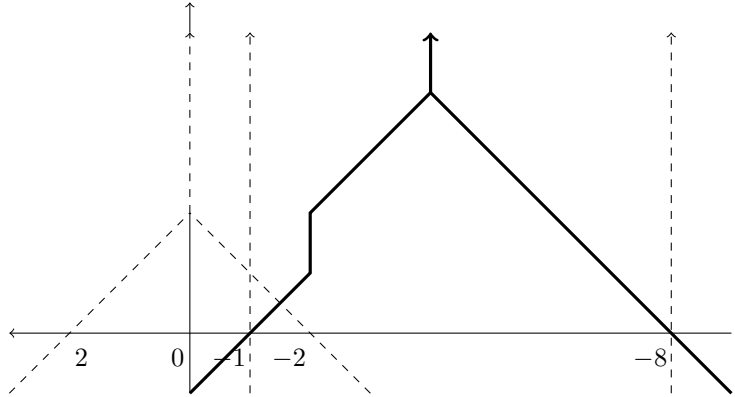
\begin{figure}[H]
  \centering
  \begin{tikzpicture}[scale=0.8]
    \draw[<-] (-3,0) -- (9,0);

    \draw (0, -1) -- (0, 2);
    \draw[->] (0, 5) -- (0, 5.5);
    \draw (-0.2, -0.1) node[anchor=north] {$0$};

    \draw[dashed] (-3, -1) -- (-2, 0) -- (0, 2);
    \draw[dashed] (3, -1) -- (2, 0) -- (0, 2);
    \draw[dashed, ->] (0, 2) -- (0, 5);
    \draw (-1.8, -0.1) node[anchor=north] {$2$};
    \draw (1.65, -0.1) node[anchor=north] {$-2$};

    \draw[dashed,->] (1, -1) -- (1, 5);
    \draw (0.65, -0.1) node[anchor=north] {$-1$};

    \draw[dashed,->] (8, -1) -- (8, 5);
    \draw (7.65, -0.1) node[anchor=north] {$-8$};

    \draw[very thick,->] (0, -1) -- (2, 1) -- (2, 2) -- (4, 4) -- (4, 5);
    \draw[very thick] (9, -1) -- (4, 4);
  \end{tikzpicture}
  \caption{Thermograph of $G=\big\{-1,\{2\mid-2\}\mid -8\big\}$}
  \label{fig:thermograph-multiple-options}
\end{figure}

\section{Preliminary Results}
When analyzing a \snort position, rather than keeping track of the colours of adjacent vertices, we simplify the board by removing the vertex that the player played on and tinting all adjacent vertices in their colour.

For example, when playing on a path of five vertices, if Left plays adjacent to a leaf, then the resulting graph is an isolated vertex tinted blue plus a path of three vertices with one leaf tinted blue (see below).

\begin{center}
  \begin{tikzpicture}[scale=0.6, vertex/.style={circle, draw, minimum size=3mm, font=\scriptsize}]
    \node[vertex] (1) at (0,0) {};
    \node[vertex] (2) at (1,0) {} edge (1);
    \node[vertex] (3) at (2,0) {} edge (2);
    \node[vertex, fill=blue] (4) at (3,0) {} edge (3);
    \node[vertex] (5) at (4,0) {} edge (4);
    \draw[->] (5,0)--(6,0);
    \begin{scope}[shift={(7,0)}]
      \node[vertex] (1a) at (0,0) {};
      \node[vertex] (2a) at (1,0) {} edge (1a);
      \node[vertex, fill=blue!30] (3a) at (2,0) {} edge (2a);
      \node[vertex,fill=blue!30] (5a) at (4,0) {};
    \end{scope}
  \end{tikzpicture}
\end{center}

If a node is tinted in both colours it is also removed from the graph as neither player can play in this vertex anymore.

For example, if Right continues in the game above and plays in the centre of the path, then the previously tinted blue leaf is also tinted red and therefore removed.
Thus, two isolated vertices, one tinted blue and one red, will remain (see below).

\begin{center}
  \begin{tikzpicture}[scale=0.6, vertex/.style={circle, draw, minimum size=3mm, font=\scriptsize}]
    \node[vertex] (1) at (0,0) {};
    \node[vertex, fill=red] (2) at (1,0) {} edge (1);
    \node[vertex, fill=blue!30] (3) at (2,0) {} edge (2);
    \node[vertex, fill=blue!30] (5) at (4,0) {};
    \draw[->] (5,0)--(6,0);
    \begin{scope}[shift={(7,0)}]
      \node[vertex, fill=red!30] (1a) at (0,0) {};
      \node[vertex, fill=blue!30] (5a) at (1,0) {};
    \end{scope}

  \end{tikzpicture}
\end{center}

When a game splits into such components, we say that it is the \emph{disjunctive sum} of the components.





Before moving to our main result, we will discuss the game values when playing \snort on several graphs that commonly occur after splitting into a disjunctive sum.

We will assume that vertices are not tinted unless we specify otherwise.

\begin{observation}\label{lem:isolated}
  The game value of \snort played on $n$ blue-tinted isolated vertices is $n$.
  The game value of $n$ isolated vertices is $0$ if $n$ is even and $*$ otherwise.
\end{observation}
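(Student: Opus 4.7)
The plan is to handle the two statements separately, in each case reducing the position to an explicit canonical form by unfolding the game tree from the given rules about tinting.

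For the blue-tinted case, first I would observe what the move options are. A blue-tinted vertex is (by the tinting convention) effectively adjacent to a played blue vertex, so Right cannot legally play there, whereas Left can (blue next to blue is allowed). Because each vertex is isolated, playing on one of them produces no new tints — it simply removes the vertex. Hence if $G_n$ denotes $n$ blue-tinted isolated vertices, then $G_n^{\mathcal L} = \{G_{n-1}\}$ and $G_n^{\mathcal R} = \varnothing$. I would then proceed by induction on $n$, with the base case $G_0 = \{\ \mid\ \} = 0$, and apply the recursive definition of the positive integers, $n = \{n-1 \mid\ \}$, to conclude $G_n = n$.

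For the untinted case, the key observation is that a single untinted isolated vertex has no neighbors, so either player may play on it and the move simply deletes the vertex, yielding the empty position $0$. Thus one untinted isolated vertex equals $\{0 \mid 0\} = *$. Then $n$ untinted isolated vertices form the disjunctive sum of $n$ copies of $*$, and I would invoke the standard identities $* + * = 0$ (so pairs cancel) to conclude the sum is $0$ if $n$ is even and $*$ if $n$ is odd.

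I do not anticipate any real obstacle: both arguments are immediate inductions on $n$ using only the move rules and the definitions of $0$, $*$, and the positive integers already recalled in the Background section. The only subtlety worth spelling out is the justification, from the \snort adjacency rule, that Right truly has no moves on a blue-tinted vertex, which is what forces the all-Left integer form in the first statement.
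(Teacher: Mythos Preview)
Your argument is correct. The paper states this result as an \emph{Observation} and gives no proof at all, so there is nothing to compare against; your induction on $n$ using the definitions of $0$, $*$, and the positive integers is exactly the intended (and essentially only) justification.
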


\begin{lemma}\label{lem-n-star-tinted}
  The game value of \snort played on the star $K_{1, n}$, with centre tinted blue, is $\{ n \mid * \}$ if $n$ is even, and $\{ n \mid 0 \}$ if $n$ is odd.
\end{lemma}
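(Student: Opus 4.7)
My plan is to proceed by induction on $n$, with base case $n=1$. There, both of Left's moves (on the centre or on the leaf) leave a single blue-tinted vertex, which has value $1$ by \cref{lem:isolated}; Right's only move colours the leaf red, whereupon the centre receives the opposing tint on top of its existing blue tint and is removed, giving value $0$. Hence $G=\{1\mid 0\}$, matching the claim for odd~$n$.

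For the inductive step with $n\geq 2$, I would enumerate the options using the tint rules of Section~3. Left has two possibilities: play on the centre, which blue-tints every leaf and yields a sum of $n$ blue-tinted isolated vertices of value $n$; or play on a leaf, which by symmetry leaves $K_{1,n-1}$ with centre still blue-tinted (the newly added blue tint coincides with the existing one), whose value is given by the induction hypothesis. Right's only legal move is on a leaf; the resulting red tint on the centre meets its existing blue tint, so the centre is removed, leaving $n-1$ untinted isolated vertices whose value is $0$ if $n-1$ is even and $*$ if $n-1$ is odd, again by \cref{lem:isolated}.

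It then remains to reduce the game to canonical form. The key step is to show that Left's ``inductive'' option $\{n-1\mid *\}$ or $\{n-1\mid 0\}$ is dominated by the integer option $n$. In the even case one computes $n-\{n-1\mid 0\}=\{n\mid 1\}$, a switch that is positive since both integer options are positive; the odd case is parallel, reducing to $\{n+*\mid 1\}>0$ (using that $n+*>0$ for $n\geq 1$). Once the switch is discarded, reversibility is easily excluded: the surviving Left option $n$ has no Right options at all, and for the unique Right option ($*$ or $0$) one verifies directly that $G>0$ (Left wins by moving to $n$, Right's response to $*$ or $0$ collapses), so no Left option $G^{RL}\geq G$ can exist. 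The main obstacle is the dominance computation; the rest is bookkeeping about how tints combine under moves.
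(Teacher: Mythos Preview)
Your proof is correct and follows essentially the same inductive approach as the paper: enumerate Left's centre and leaf options and Right's unique leaf option, then discard Left's leaf option by dominance. The paper compresses the dominance step into the phrase ``dominated by induction'' and omits the reversibility check entirely, whereas you carry out both explicitly; your additions are sound and make the argument more self-contained.
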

\begin{proof}
  Right can only move in a leaf as the center is already tinted blue.
  This move will remove the leaf and the centre creating a position with $n - 1$ isolated vertices, thus its value is $*$ if $n$ is even, and $0$ if $n$ is odd.
  Left can move either in the centre, creating $n$ isolated blue-tinted vertices of value $n$, or in a leaf to the position $K_{1,n - 1}$ with blue-tinted centre.
  The latter is dominated by induction.
  Thus the game value is $\{ n \mid * \}$ if $n$ is even, and $\{ n \mid 0 \}$ if $n$ is odd.
\end{proof}

\begin{lemma}\label{lem-n-star}
  The game value of \snort played on the star $K_{1, n}$ is $\pm n$.
\end{lemma}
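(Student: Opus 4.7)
The plan is to compute the canonical form by enumerating the options for each player and showing all but one on each side are dominated. By the symmetry of the uncoloured star, it suffices to analyze Left's options and invoke $-G = G$ arguments for Right.

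First I would list Left's moves on $K_{1,n}$. Left can either play in the centre, which tints every leaf blue and removes the centre, leaving $n$ blue-tinted isolated vertices; by \cref{lem:isolated} this component has value $n$. Alternatively, Left can play in any one of the $n$ leaves, which tints the centre blue, removes that leaf, and yields a copy of $K_{1,n-1}$ with centre tinted blue. By \cref{lem-n-star-tinted} this option has value $\{n-1\mid *\}$ or $\{n-1\mid 0\}$ depending on the parity of $n-1$. In either case the option is strictly less than the integer $n$ (its Left stop is $n-1$), so it is dominated by the centre move. Hence $G^{\mathcal{L}}=\{n\}$ after removing dominated options.

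By the symmetry that swaps the two players' colours, $-G$ is \snort on the same uncoloured graph, hence $G=-G$, which forces $G^{\mathcal{R}}=-G^{\mathcal{L}}=\{-n\}$. Therefore $G=\{n\mid -n\}=\pm n$.

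The only real content is the domination step, and that is routine: comparing the integer $n$ to the switch $\{n-1\mid 0\}$ or $\{n-1\mid *\}$ one uses that the Left stop of the latter is $n-1<n$, so Left never prefers the leaf move over the centre move. I do not anticipate any subtle obstacle here.
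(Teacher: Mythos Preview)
Your proof is correct and follows essentially the same approach as the paper: enumerate the two Left moves (centre versus leaf), use \cref{lem:isolated} and \cref{lem-n-star-tinted} to evaluate them, observe the leaf option is dominated by $n$, and then appeal to the colour-swapping symmetry of the uncoloured board to conclude Right's options are the negatives. The paper's proof is nearly identical, only stating the domination without the Left-stop remark you add.
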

\begin{proof}
  There are only two distinct moves up to symmetry --- in the centre or in a leaf.
  Left moving in the center will create $n$ tinted vertices, so the value of this move is $n$.
  Moving in a leaf will remove it and tint the center of the star, thus its value is $\{ n - 1 \mid 0 \}$ if $n$ is even and $\{ n - 1 \mid * \}$ when $n$ is odd, which is dominated by $n$.
  Right's moves are the negatives of these moves as the initial position has no tinted vertices.
  Therefore, the game value is $\pm n$.
\end{proof}

\begin{observation}\label{lem:connected-stars}
  As discussed in \cite{BerlekampCG2004}, the edge between two adjacent vertices tinted the same colour can be removed.
  In particular, two stars with centres tinted the same colour and connected with an edge have the same game value as if the centres were not connected.
\end{observation}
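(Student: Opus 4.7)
The plan is to first establish the stronger statement about edges between same-color tinted vertices, and then immediately derive the star-joining claim. Let $G$ be any \snort position containing adjacent vertices $u,v$ both tinted the same color (say blue), and let $G'$ be the same position with the edge $uv$ removed. I will prove $G = G'$ as games by induction on the total number of uncolored vertices.

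The first step is to note that the set of legal moves is identical in $G$ and $G'$, because whether a player may play at a vertex depends only on its tinting, not on its edges. In particular, Right cannot play on either $u$ or $v$. The core of the argument is then a case analysis showing that after any single move, the resulting positions in $G$ and $G'$ either coincide exactly or again differ only by an edge between two same-color tinted vertices (to which the inductive hypothesis applies). The cases are: (i) Left plays at $u$ (symmetrically $v$): then $u$ is removed in both positions, and although in $G$ this would retint $v$ blue through $uv$, the vertex $v$ is already blue, so nothing changes; the resulting positions are equal, with no edge $uv$ to worry about. (ii) Either player plays at some other vertex $w$: then $u$ and $v$ receive identical tinting updates in $G$ and $G'$ (depending only on adjacency to $w$, which is the same in both graphs). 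If both $u$ and $v$ survive the move with the same tinting, they remain adjacent-in-$G$ and non-adjacent-in-$G'$ but still same-color tinted, so induction applies. If either becomes bi-tinted and is removed, the edge $uv$ disappears from $G$ as well, and the positions agree.

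Thus $G$ and $G'$ have matching Left- and Right-options up to game equality, so $G = G'$. The ``in particular'' claim now follows immediately: given two stars whose centers are tinted the same color and joined by an edge, that joining edge meets the hypothesis of the main claim and can be deleted, producing the disjunctive sum of the two disconnected tinted stars.

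The main obstacle is purely bookkeeping in the case analysis, especially handling the subcase where a move at a third vertex $w$ is adjacent to $u$ or $v$ and possibly removes them; once one verifies that in every sub-case the leftover edge $uv$ either persists between two still-same-color tinted vertices or has already been eliminated by vertex removal, the induction goes through cleanly.
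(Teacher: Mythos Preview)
Your argument is correct. Note, however, that the paper does not supply its own proof of this statement: it records it as an observation and defers to \cite{BerlekampCG2004}. Your induction on the number of remaining vertices, together with the case analysis showing that every move either deletes one of $u,v$ (and hence the edge $uv$) or leaves both alive with the same blue tint, is precisely the standard justification for why same-colour tinted edges are inert in \Snort. The only point worth tightening is terminological: by ``uncolored vertices'' you presumably mean all vertices still present in the position (tinted or not), since tinted vertices are still playable by one side; with that reading the induction is well-founded because each move removes at least the played vertex.
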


\begin{lemma}\label{lem:connected-stars-diff-colors}
  \snort played on the graph consisting of two copies of $K_{1, n}$ connected at their centres, which are tinted opposite colours, has game value $\pm n$ if $n$ is even and $\pm (n*)$ if $n$ is odd.
\end{lemma}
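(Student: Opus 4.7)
The plan is to exploit the colour-swap symmetry and show that Left's dominant move is to play the blue-tinted centre, yielding the stated value. First I would note that swapping the two colours is both a graph automorphism (the two stars are congruent) and sends the position to its negative. Consequently Right's options are exactly the negatives of Left's options, so the game value has the form $\pm G^{\mathcal{L}}$, and it suffices to determine Left's undominated option(s).

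Next, I would enumerate Left's moves up to symmetry: playing the blue-tinted centre $c_A$, playing a leaf of the blue-tinted star, or playing a leaf of the red-tinted star. Playing $c_A$ colours it blue, which makes the $n$ leaves of its star into blue-tinted isolated vertices and also both-tints the red-tinted centre $c_B$ (removing it and disconnecting the $n$ untinted leaves on the other side); by \cref{lem:isolated} this has value $n$ if $n$ is even and $n*$ if $n$ is odd --- exactly the claimed answer. Playing a leaf of the red-tinted star both-tints and removes $c_B$, leaving $K_{1,n}$ with blue-tinted centre plus $n-1$ untinted isolated vertices; by \cref{lem-n-star-tinted} and \cref{lem:isolated} this has value $\{n\mid *\}+*$ in the even case and $\{n\mid 0\}$ in the odd case. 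Playing a leaf of the blue-tinted star leaves an asymmetric position in which a blue-tinted $K_{1,n-1}$ is joined by an edge at the centres to a red-tinted $K_{1,n}$.

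The remaining task is to show that the first option dominates the other two. For the red-leaf move, a direct strategy argument in the difference game $n-(\{n\mid *\}+*)=n+\{*\mid -n\}+*$ (respectively $n*-\{n\mid 0\}=n+*+\{0\mid -n\}$) shows the difference is strictly positive: Left going first collapses the switch to $*$ (respectively $0$), reducing the sum to a positive integer, and each Right opening either collapses the switch in Left's favour or plays in a $*$, leaving Left to cash in the switch and the integer. The main obstacle is the blue-leaf move: the resulting asymmetric position falls outside the lemma's statement and its value must be bounded separately. I would handle this by induction on $n$ with a hypothesis strengthened to cover the asymmetric positions, showing that every Left option there is bounded above by at most $(n-1)*$ and hence strictly below the first option's value, giving the required strict dominance.
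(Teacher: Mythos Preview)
Your setup and the analysis of the blue-centre move are the same as the paper's, and your treatment of the red-leaf move is actually more explicit than the paper's (which simply names a Right reply to a position of value $0$). The real divergence, and the real problem, is the blue-leaf move.

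For that case the paper does \emph{not} attempt to value the asymmetric position or set up an induction. It simply observes that from the asymmetric position Right can play the red-tinted centre: this both-tints and deletes the blue centre, leaving $n$ red-tinted isolated vertices together with $n-1$ untinted isolated vertices, of value $-n+s(n-1)$. That single reply is enough to show the blue-leaf Left option is dominated by the centre option, and no induction or strengthened hypothesis is needed.

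Your proposed route has two concrete issues. First, the logical step ``every Left option of the asymmetric position is at most $(n-1)*$, hence the asymmetric position is strictly below the centre option'' is not valid: bounding the Left options of a game $H$ does not bound $H$ itself (e.g.\ $*$ has all Left options $\le 0$ yet $*\not\le 0$); for domination you must compare $H$ directly with $n+s(n)$. Second, even under the alternative reading that you intend to bound the asymmetric position itself by $(n-1)*$, that bound is false already at $n=1$: the asymmetric position there has value $\{*\mid -1\}$, and $\{*\mid -1\}\not\le *$ (Right can move in $\{1\mid *\}$ to $*$ in the difference, leaving $*+*=0$ with Left to move). So the specific induction target you name would not go through, and the unstated ``strengthened hypothesis'' would have to be formulated and proved from scratch. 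The paper's one-line Right response sidesteps all of this.
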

\begin{proof}
  Up to symmetry, each player has the same three moves, i.e.\ in the centre tinted their colour, in a leaf connected to the centre of their own colour, or in a leaf connected to the centre of their opponent's colour.
  Thus, we will consider only moves for Left.

  Moving in the blue-tinted centre results in a position with $n$ blue-tinted isolated vertices, and $n$ further isolated vertices. So the game value is $n$ if $n$ is even and $n*$ if $n$ is odd.
  We will show that all other moves are dominated.

  If Left were to move in a leaf of the red-tinted centre, Right can respond with a move in a leaf of the blue-tinted center into the position with $2n - 2$ isolated vertices, which has a game value of $0$ and is thus dominated by the option $n$ or $n*$.

  After Left moves in a leaf of the blue-tinted centre, Red can respond in the red-tinted centre, to the position with $n - 1$ isolated vertices and $n$ red-tinted isolated vertices, which has a game value $-n*$ if $n$ is even and $-n$ if $n$ is odd, which is again dominated.

  Therefore the game value of the position is $\pm n$ if $n$ is even and $\pm (n*)$ if $n$ is odd.
\end{proof}

\section{Playing on Caterpillars}

Since the game value of \snort played on the star $K_{1, n}$ is $\pm n$, we know that the temperature for this position is $n$, and thus equal to the degree of the board.
In this section we will show that the temperature can be unboundedly larger than the degree.
We will consider a family of positions played on a particular caterpillar tree (see \cite{harary1973number} for more on caterpillars).

\begin{definition}
  The caterpillar $C(n+1,n,n+1)$ consists of a main path of length 3, whose central vertex has $n$ leaves added, and $n+1$ leaves are added to the side vertices.
  I.e., the vertices are
  \[V=\{a,b,c,a_1,\ldots,a_{n+1},b_1,\ldots,b_n,c_1,\ldots,c_{n+1}\}\]
  and the edge set is
  \[E=\{ab,bc\}\cup\bigcup_{i=1}^{n+1}\{a_ia,c_ic\}\cup\bigcup_{j=1}^n\{b_jb\}.\]
\end{definition}

\begin{example}
  The caterpillars $C(4,3,4)$ and $C(5,4,5)$ are shown in \cref{fig:caterpillar-tree}.
\end{example}

\begin{figure}[H]
  \resizebox{\linewidth}{!}{
    \begin{tikzpicture}[node distance={14mm}, main/.style = {draw, circle, minimum size=8mm}]
      \vertex{(0,0)}{a}
      \vertex{(1,0)}{b}
      \vertex{(2,0)}{c}

      \path (a) edge (b);
      \path (b) edge (c);

      \vertex{(-1.5,1.5)}{a_1}
      \vertex{(-1.5,0.5)}{a_2}
      \vertex{(-1.5,-0.5)}{a_3}
      \vertex{(-1.5,-1.5)}{a_4}

      \path (a) edge (a_1);
      \path (a) edge (a_2);
      \path (a) edge (a_3);
      \path (a) edge (a_4);

      \vertex{(0, -1.5)}{b_1}
      \vertex{(1, -1.5)}{b_2}
      \vertex{(2, -1.5)}{b_3}

      \path (b) edge (b_1);
      \path (b) edge (b_2);
      \path (b) edge (b_3);

      \vertex{(3.5,1.5)}{c_1}
      \vertex{(3.5,0.5)}{c_2}
      \vertex{(3.5,-0.5)}{c_3}
      \vertex{(3.5,-1.5)}{c_4}

      \path (c) edge (c_1);
      \path (c) edge (c_2);
      \path (c) edge (c_3);
      \path (c) edge (c_4);
    \end{tikzpicture}
    \hspace{1cm}

    \begin{tikzpicture}[node distance={14mm}, main/.style = {draw, circle, minimum size=8mm}]
      \vertex{(0,0)}{a}
      \vertex{(1,0)}{b}
      \vertex{(2,0)}{c}

      \path (a) edge (b);
      \path (b) edge (c);

      \vertex{(-1.5,2)}{a_1}
      \vertex{(-1.5,1)}{a_2}
      \vertex{(-1.5,0)}{a_3}
      \vertex{(-1.5,-1)}{a_4}
      \vertex{(-1.5,-2)}{a_5}

      \path (a) edge (a_1);
      \path (a) edge (a_2);
      \path (a) edge (a_3);
      \path (a) edge (a_4);
      \path (a) edge (a_5);

      \vertex{(-0.5, -2)}{b_1}
      \vertex{(0.5, -2)}{b_2}
      \vertex{(1.5, -2)}{b_3}
      \vertex{(2.5, -2)}{b_4}

      \path (b) edge (b_1);
      \path (b) edge (b_2);
      \path (b) edge (b_3);
      \path (b) edge (b_4);

      \vertex{(3.5,2)}{c_1}
      \vertex{(3.5,1)}{c_2}
      \vertex{(3.5,0)}{c_3}
      \vertex{(3.5,-1)}{c_4}
      \vertex{(3.5,-2)}{c_5}

      \path (c) edge (c_1);
      \path (c) edge (c_2);
      \path (c) edge (c_3);
      \path (c) edge (c_4);
      \path (c) edge (c_5);
    \end{tikzpicture}
  }
  \caption{The Caterpillars $C(4,3,4)$ (on the left) and $C(5,4,5)$ (on the right).}
  \label{fig:caterpillar-tree}
\end{figure}
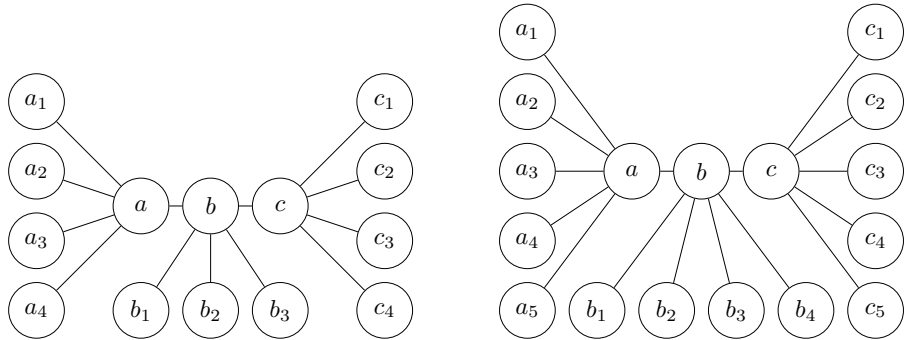

\cref{tab:variousN} contains the canonical forms and temperatures when playing \snort on $C(n+1,n,n+1)$ for small values of $n$, compared to the degree of the caterpillar.
\begin{table}[H]
  \centering
  \begin{tabular}{l|l|l|l|l}
    $n$ & Canonical Form                            & $t(G)$ & $\deg(G)$ & $t(G)-\deg(G)$ \\
    \hline
    $1$ & $\pm \{3*,  \{\{5  \mid 4\}   \mid *\}\}$ & 3      & 3         & 0              \\
    $2$ & $\pm \{5,   \{\{8  \mid 6*\}  \mid 0\}\}$ & 5      & 4         & 1              \\
    $3$ & $\pm \{7*,  \{\{11 \mid 8\}   \mid *\}\}$ & 7      & 5         & 2              \\
    $4$ & $\pm \{9,   \{\{14 \mid 10*\} \mid 0\}\}$ & 9      & 6         & 3              \\
    $5$ & $\pm \{11*, \{\{17 \mid 12\}  \mid *\}\}$ & 11     & 7         & 4              \\
    $6$ & $\pm \{13,  \{\{20 \mid 14*\} \mid 0\}\}$ & 13     & 8         & 5              \\
    $7$ & $\pm \{15*, \{\{23 \mid 16\}  \mid *\}\}$ & 15     & 9         & 6              \\
    $8$ & $\pm \{17,  \{\{26 \mid 18*\} \mid 0\}\}$ & 17     & 10        & 7              \\
  \end{tabular}
  \caption{The canonical form and temperature of \snort played on $C(n+1,n,n+1)$ compared to the degree of the board}
  \label{tab:variousN}
\end{table}

From this data, we generalize the canonical form as follows.

\begin{theorem}\label{thm-canonical}
  The canonical form of \snort played on the caterpillar $C(n+1, n, n+1)$ is
  \[\pm \left\{ 2n + 1 + s(n), \left\{ \left\{ 3n + 2 \mid 2n + 2 + s(n - 1) \right\} \mid s(n) \right\} \right\}\]
  where $s(n)$ is $0$ if $n$ is even and $*$ if $n$ is odd.
\end{theorem}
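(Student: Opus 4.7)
The plan is to exploit the color-swap symmetry of the uncolored caterpillar, enumerate Left's distinct moves up to graph automorphism, evaluate each resulting position using the preliminary lemmas, and then remove dominated options to match the claimed canonical form.

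Since no vertex is initially tinted, the map that swaps every color together with the automorphism $a\leftrightarrow c$ (and the matching leaf relabellings) fixes the position, so the value has the form $\pm\{\,\cdots\,\}$ and it suffices to find Left's undominated options. Modulo the graph's automorphisms, Left has only four move-types: play on $b$, play on some $b_j$, play on $a$ (equivalently $c$), or play on some $a_i$ (equivalently $c_i$).

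For each of these four moves I would rewrite the resulting position as a disjunctive sum of simpler components and apply the results of Section~3. Playing on $b$ decomposes the board into $n$ blue-tinted isolated vertices plus two copies of $K_{1,n+1}$ with blue-tinted centre, whose values are supplied by \cref{lem:isolated} and \cref{lem-n-star-tinted}. Playing on $a$ gives $n+1$ blue-tinted isolated vertices together with a smaller ``partially tinted caterpillar'' on $\{b,c\}\cup\{b_j\}\cup\{c_i\}$ in which $b$ is blue-tinted; I would analyse this residue by its own move set, applying \cref{lem-n-star-tinted}, \cref{lem:connected-stars}, and \cref{lem:connected-stars-diff-colors} (the last of which lets me collapse the ``edge between a blue-tinted centre and a red-tinted centre'' that appears when Right responds), and express it in canonical form. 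The moves on a $b_j$ or an $a_i$ leave strictly more of the graph uncoloured and I would argue, by comparing left and right stops of the resulting decompositions, that each is strictly dominated by one of the two surviving options.

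Finally I would identify the two surviving Left options as the integer-like game $2n+1+s(n)$ and the hot game $\bigl\{\{3n+2\mid 2n+2+s(n-1)\}\mid s(n)\bigr\}$, tracking how the parity-dependent $*$ terms arise from the parities of the isolated-vertex counts that appear in each decomposition. The main obstacle will be the dominance argument combined with the canonical-form simplification of these disjunctive sums: several immediate-move values look superficially different from the claimed hot option, and recognising the correct nested Left- and Right-options after cancelling dominated and reversible plays --- together with the implicit recursion to the parameter $n-1$ that produces the inner term $2n+2+s(n-1)$ --- will require a careful case split on the parity of $n$ and consistent bookkeeping of the $s(\cdot)$ adjustments.
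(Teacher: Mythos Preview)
Your plan matches the paper's proof almost step for step: the same symmetry reduction to Left options, the same four move-types on $b$, $a$ (or $c$), a leaf of $b$, and a leaf of $a$ (or $c$), the same use of \cref{lem:isolated}, \cref{lem-n-star-tinted}, \cref{lem:connected-stars}, and \cref{lem:connected-stars-diff-colors} to evaluate the resulting sums, and the same identification of the two surviving options. The only minor deviation is that the paper disposes of the dominated leaf moves by exhibiting an explicit Right reply and evaluating the result, rather than by comparing stops; since comparing stops alone does not in general certify $G^{L_1}\ge G^{L_2}$, you should be prepared to fall back on that concrete-response argument when you carry the plan out.
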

\begin{proof}
  Since the board is empty, the options for Right are the negative of the Left options.
  Thus it is enough for us to determine the options for Left.

  There are only four possible first moves up to symmetry: Left could move in $b$, move in $a$ or $c$, move in a leaf of $b$, or move in a leaf of $a$ or $c$.

  \medskip

  \textbf{Case 1: Move in $b$}

  First, when Left moves in $b$, the board will split into $n$ blue-tinted isolated vertices (those that were adjacent to $b$) and two stars with blue-tinted centers and $n+1$ leafs each.
  Using \cref{lem:isolated} and \cref{lem-n-star-tinted}, the value of this option is \[n + \{n+1 \mid s(n)\} + \{n+1 \mid s(n)\} = 2n + 1 + s(n).\]

  \medskip

  \textbf{Case 2: Move in $a$ or $c$}

  Now consider when Left moves in $a$.

  If Left moves again in this position, we claim that the only sensible move is in $c$ to the position with $2n+2$ isolated blue-tinted vertices and a star with a blue-tinted center with $n$ leaves, which has a value of \[2n + 2 + \{n \mid s(n - 1)\} = \{ 3n + 2 \mid 2n + 2 + s(n - 1) \}.\] All other moves are worse for Left:
  \begin{itemize}
    \item The move in $b$ is dominated as it leads to a position with $2n + 1$ isolated blue-tinted vertices and a star with a blue-tinted center with $n + 1$ leaves with similar, but less favourable, value $(2n + 1) + \{n + 1 \mid s(n) \} = \{ 3n + 2 \mid 2n + 1 + s(n) \}$.
    \item A move in a leaf of $b$ is dominated by the move in $c$, which can be seen by showing that Left wins in the difference going second. 
    \item A move in a leaf of $c$ gives us $n + 1$ blue-tinted isolated vertices, plus two stars with blue-tinted centres and $n$ leaves with connected centres, which is equal to the centres not being connected by \cref{lem:connected-stars}. Thus the game value is $(n + 1) + \{n \mid s(n - 1) \} + \{n \mid s(n - 1) \} = 2n + 1 + s(n - 1)$, which is dominated.
    \item The isolated blue-tinted vertices do not need to be considered by the number avoidance principle since they have value 1.
  \end{itemize}

  Right's good responding move is in $c$.
  In the resulting position, $b$ is no longer playable and thus removed and we are left with two stars with tinted centre, one blue and one red, with $n+1$ leaves each, and $n$ isolated vertices.
  Since the two stars are negatives of each other, they cancel and the position has value $s(n)$.
  All other moves for Right do not need to be considered:
  \begin{itemize}
    \item A move in a leaf of $b$ removes it and results in $n-1$ empty isolated vertices (the leaves of $b$), a star with empty centre and $n+1$ leaves, and $n+1$ blue-tinted isolated vertices from Left's move in $a$.
          Therefore, the position has value $s(n - 1) \pm(n + 1) + (n + 1)= \{2n + 2 + s(n - 1) \mid s(n - 1) \}$.
    \item A move in a leaf of $c$ leads to two stars - one with blue-tinted, one with red-tinted center, and $n$ leaves each and centres connected by an edge, and $n+1$ blue-tinted isolated vertices from Left's move in $a$.
          By \cref{lem:connected-stars-diff-colors} the game value of this position is $\pm (n + s(n)) + (n + 1) = \{2n + s(n) \mid 1 + s(n)\}$.
  \end{itemize}

  \textbf{Case 3: Move in a leaf of $b$}

  A move in a leaf of $b$ is dominated as Right can respond with a move in $a$ or $c$ to a position consisting of a star with $n + 1$ leaves, $n - 1$ empty isolated vertices, and $n + 1$ red-tinted isolated vertices.
  This has game value \[\pm(n + 1) + s(n - 1) - (n + 1) = \{s(n - 1) \mid -2n - 2 + s(n - 1)\},\] which is dominated as now Left can move only to $s(n - 1) < 3n + 2$.

  \medskip

  \textbf{Case 4: Move in a leaf of $a$ or $c$}

  A move in a leaf of $a$ is dominated as Right can respond with a move in $b$.
  This results in a position with $n$ isolated empty vertices, $n$ isolated red-tinted vertices, and a star with red-tinted centre and $n + 1$ leaves, which has a value of $s(n) - n - \{ n + 1 \mid s(n)\} = \{ -n \mid -2n - 1 + s(n) \}$, which is a win for Right.

  \vspace{0.5cm}

  Thus the only two sensible moves for Left are to $2n + 1 + s(n)$ and $\{ \{ 3n + 2 \mid 2n + 2 + s(n - 1) \} \mid s(n)\}$.
  Since Right's options are the negative of this, so the canonical form is
  \[
    \pm \{ 2n + 1 + s(n), \{ \{ 3n + 2 \mid 2n + 2 + s(n - 1) \} \mid s(n)\} \}. \qedhere
  \]
\end{proof}

\begin{corollary}
  The temperature of \snort played on the caterpillar $C(n+1, n, n+1)$ is $2n+1$.
\end{corollary}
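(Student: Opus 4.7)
The plan is to build the thermograph of \snort on $C(n+1,n,n+1)$ directly from the canonical form provided by \cref{thm-canonical}. Denote that canonical form by $\pm\{A, B\}$ where $A = 2n+1+s(n)$ and $B = \big\{\{3n+2 \mid 2n+2+s(n-1)\} \mid s(n)\big\}$. Because the $\pm$ construction makes Right's options the negatives of Left's, the thermograph of $G$ is symmetric about the line $h = 0$, and hence the temperature of $G$ is the smallest $t \ge -1$ at which $\max(r_A(t), r_B(t)) = t$, where $r_X(t)$ denotes the right boundary of the thermograph of $X$ at temperature $t$.

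First I would compute the two individual thermographs. The value $A = 2n+1+s(n)$ differs from the integer $2n+1$ by at most a $*$, so its thermograph is a mast at $h = 2n+1$ that begins at temperature $-1$ (if $n$ is even) or $0$ (if $n$ is odd); in both cases $r_A(t) = 2n+1$ for all $t \ge 0$. For $B$ I would first handle the inner game $C = \{3n+2 \mid 2n+2+s(n-1)\}$, which is essentially a switch between two integers, with temperature $\tfrac{n}{2}$ and mast at $h = \tfrac{5n+4}{2}$. Then $B = \{C \mid s(n)\}$ has temperature $\tfrac{5n+4}{4}$ and mast at $h = \tfrac{5n+4}{4}$, obtained by shearing the right side of $C$'s thermograph against the flat left side of the $s(n)$ thermograph.

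The decisive inequality is $\tfrac{5n+4}{4} < 2n+1$ for every $n \ge 1$, which forces $r_A(t) = 2n+1$ to strictly dominate $r_B(t)$ throughout $t \ge 0$. Consequently on the interval $t \in [0, 2n+1]$ the left side of the thermograph of $G$ is $LS_G(t) = 2n+1 - t$, and by the $\pm$ symmetry the right side is $RS_G(t) = -(2n+1) + t$. The two sides first meet at $t = 2n+1$, $h = 0$, and this is the temperature of $G$.

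The main obstacle is the parity-dependent infinitesimals $s(n)$ and $s(n-1)$, which alter the low-temperature shapes of the thermographs of $A$, $C$, and $B$ in slightly different ways according to the parity of $n$. The saving observation is that all of these perturbations are confined to the range $t < 0$, well below the meeting temperature $t = 2n+1$; they change neither the mast values $2n+1$, $\tfrac{5n+4}{2}$, and $\tfrac{5n+4}{4}$ nor the dominance of $r_A$ over $r_B$ for $t \ge 0$, so the two parity cases collapse into a single argument yielding $t(G) = 2n+1$.
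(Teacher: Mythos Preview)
Your proposal is correct and follows essentially the same route as the paper: both arguments observe that the $s(n)$ and $s(n-1)$ infinitesimals affect only the region $t<0$ of the thermograph, pass to the reduced form $\pm\{2n+1,\{\{3n+2\mid 2n+2\}\mid 0\}\}$, recursively compute the thermographs of the inner switch (mast $\tfrac{5n+4}{2}$, temperature $\tfrac{n}{2}$) and of $B$ (mast $\tfrac{5n+4}{4}$), and then use $\tfrac{5n+4}{4}<2n+1$ to conclude that the option $A=2n+1$ dominates, giving temperature $2n+1$. The only cosmetic difference is that you phrase the final step analytically via $\max(r_A(t),r_B(t))=t$, whereas the paper draws the three thermographs explicitly.
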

\begin{proof}
  We will show that this is the temperature by constructing a modified version of the thermograph.

  First note that adding $\ast$ to any position will only change a thermograph below the horizontal axis, i.e., only when ``cooling'' by negative values.
  Since we are only interested in the temperature here, and the temperature is positive, it is sufficient if we construct the thermograph above the horizontal axis.
  Instead of using the full canonical form from \cref{thm-canonical}, we will construct the thermograph of the simplified value \[\pm \{ 2n + 1, \{ \{ 3n + 2 \mid 2n + 2 \} \mid 0\} \},\] which is also known as the reduced canonical form.
  In particular, we do not need to consider even and odd values of $n$ separately.

  The thermograph is constructed recursively, with the thermograph of the follower $\{3n+2\mid 2n+2\}$ shown in \cref{fig:thermograph-1}, the one of $\{\{3n+2\mid 2n+2\}\mid 0\}$ in \cref{fig:thermograph-2}, and finally the thermograph of the full game in \cref{fig:thermograph}.

  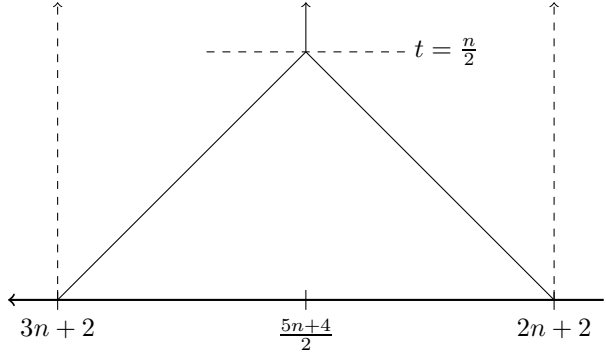
\begin{figure}[H]
    \centering
    \begin{tikzpicture}[scale=0.66]
      \draw[thick,<-] (0,0) -- (12,0); 
      \draw[dashed,->] (1, 0) -- (1, 6);
      \draw[dashed,->] (11, 0) -- (11, 6);

      \draw(1,0) -- (6, 5);
      \draw (1, 0.2) -- (1, -0.2) node[anchor=north] {$3n+2$};

      \draw(11,0) -- (6, 5);
      \draw (11, 0.2) -- (11, -0.2) node[anchor=north] {$2n+2$};

      \draw[->] (6, 5) -- (6, 6);
      \draw (6, 0.2) -- (6, -0.2) node[anchor=north] {$\frac{5n+4}{2}$};
      \draw[dashed] (4,5) -- (8,5) node[anchor=west] {$t = \frac{n}{2}$};

    \end{tikzpicture}
    \caption{Thermograph of $\{ 3n + 2 \mid 2n + 2 \}$}
    \label{fig:thermograph-1}
  \end{figure}

  \begin{figure}[H]
    \centering
    \begin{tikzpicture}[scale=0.235]
      \draw[thick,<-] (-33,0) -- (1,0); 

      \draw[dashed] (-32,0) -- (-27, 5);
      \draw (-32, 0.5) -- (-32, -0.5) node[anchor=north] {$3n+2$};

      \draw[dashed] (-22,0) -- (-27, 5);
      \draw (-22, 0.5) -- (-22, -0.5) node[anchor=north] {$2n+2$};

      \draw[dashed,->] (-27,5) -- (-27,15);
      \draw (-27, 0.5) -- (-27, -0.5) node[anchor=north] {$\frac{5n+4}{2}$};


      \draw[dashed,->] (0,0) -- (0,15);
      \draw (0, 0.5) -- (0, -0.5) node[anchor=north] {$0$};

      \draw (-22,0) -- (-22,5) -- (-13, 13.5);
      \draw (0,0) -- (-13,13.5);
      \draw[dashed] (-16.5,13.5) -- (-9.5,13.5) node[anchor=west] {$t = \frac{5n}{4}+1$};

      \draw[->] (-13,13.5) -- (-13,15);

      \draw (-13, 0.5) -- (-13, -0.5) node[anchor=north] {$\frac{5n}{4} + 1$};
    \end{tikzpicture}
    \caption{Thermograph of $\{ \{ 3n + 2 \mid 2n + 2 \} \mid 0\}$}
    \label{fig:thermograph-2}
  \end{figure}

  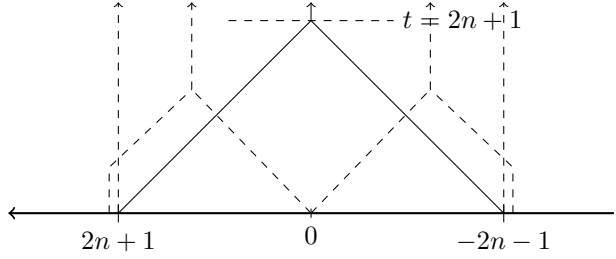
\begin{figure}[H]
    \centering
    \begin{tikzpicture}[scale=0.122]
      \draw[thick,<-] (-33,0) -- (33,0); 
      \draw[dashed] (-9,21) -- (9,21) node[anchor=west] {$t = 2n + 1$};

      \draw (0, 0.5) -- (0, -0.5) node[anchor=north] {$0$};
      \draw (-21, 1) -- (-21, -1) node[anchor=north] {$2n + 1$};
      \draw (21, 1) -- (21, -1) node[anchor=north] {$-2n - 1$};
      \draw (-21,0) -- (0,21);
      \draw (21,0) -- (0,21);
      \draw[->] (0, 21) -- (0, 23);

      \draw[dashed] (-22,0) -- (-22,5) -- (-13, 13.5);
      \draw[dashed] (0,0) -- (-13,13.5);
      \draw[dashed,->] (-13,13.5) -- (-13,23);
      \draw[dashed,->] (-21,0) -- (-21,23);

      \draw[dashed] (22,0) -- (22,5) -- (13, 13.5);
      \draw[dashed] (0,0) -- (13,13.5);
      \draw[dashed,->] (13,13.5) -- (13,23);
      \draw[dashed,->] (21,0) -- (21,23);
    \end{tikzpicture}


    \caption{Thermograph of $\pm \big\{ 2n + 1, \{ \{ 3n + 2 \mid 2n + 2 \} \mid 0\} \big\}$}
    \label{fig:thermograph}
  \end{figure}

  Therefore the temperature of the position from \cref{thm-canonical} is $2n + 1$. \qedhere
\end{proof}

Observe that the degree of the caterpillar $C(n+1,n,n+1)$ is $n+2$, thus the difference between the temperature and the degree is $n-1$.
We therefore have the following result.
\begin{theorem}
  Over all \snort positions, the difference between the temperature and the degree is unbounded.
\end{theorem}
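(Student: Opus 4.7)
The plan is to simply package the preceding corollary as an unboundedness statement by exhibiting an explicit infinite family. For each positive integer $n$, I would take the caterpillar $C(n+1,n,n+1)$ as the witness position. The corollary already gives $t(C(n+1,n,n+1)) = 2n+1$, so the only thing left is to compute the maximum degree of this graph.

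The degree calculation is immediate from the definition of $C(n+1,n,n+1)$. The vertex $b$ has degree $n+2$, being adjacent to $a$, $c$, and its $n$ private leaves $b_1,\ldots,b_n$. By the symmetric construction, each of $a$ and $c$ also has degree $n+2$: vertex $a$ is adjacent to $b$ and to its $n+1$ leaves $a_1,\ldots,a_{n+1}$, and similarly for $c$. Every other vertex is a leaf of degree $1$. Hence $\deg(C(n+1,n,n+1)) = n+2$.

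Putting these two facts together gives
\[
t(C(n+1,n,n+1)) - \deg(C(n+1,n,n+1)) = (2n+1) - (n+2) = n-1,
\]
which grows without bound as $n \to \infty$. This is exactly the desired unboundedness, so the theorem follows.

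There is essentially no obstacle at this stage: the genuine work has already been carried out in \cref{thm-canonical} and the corollary that computes the temperature. If anything, the only thing one might want to double-check is that the statement of the theorem refers to $\deg(G)$ as the maximum degree (which is the convention used throughout the paper's earlier discussion of $t(G) \le \deg(G)$), so that the simple degree computation above really does apply verbatim.
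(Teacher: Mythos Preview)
Your proposal is correct and is essentially identical to the paper's own argument: the paper simply observes that $\deg(C(n+1,n,n+1))=n+2$, so by the preceding corollary the difference $t(G)-\deg(G)=n-1$ is unbounded. Your write-up just makes the degree computation explicit.
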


\section{Future work}

Although we have shown that the temperature of \snort cannot be bounded by a multiple of the degree, it still seems likely that the temperature of \snort could be bounded by some polynomial in the $n$th degrees.
In particular, we give a tentative conjecture that the first and second degrees are sufficient.

We define $\deg_2(v)$ as the number of vertices at distance 2 from vertex $v$.
The second degree of a graph is the maximum second degree of its vertices, i.e.\ $\deg_2(G) = \max_{v\in V(G)} \deg_2(v)$.

\begin{conjecture}\label{conjecture}
  When playing \snort on the graph $G$, we have
  \[t(G) \leq \deg(G) + \frac{1}{2}\deg_2(G).\]
\end{conjecture}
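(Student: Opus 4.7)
The plan is to attack the conjecture by strong induction on the number of uncoloured vertices, after first strengthening the statement so that it applies to partially tinted boards, with $\deg$ and $\deg_2$ computed over the subgraph of playable vertices. The base cases with zero or one playable vertex have temperature $-1$ or $0$, comfortably below any nonnegative bound.

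For the inductive step, I would work through the thermograph construction of Section 2. Since $t(G) = \tfrac{1}{2}\bigl(L(G) - R(G)\bigr)$, where $L$ and $R$ denote the Left- and Right-stops at cooling level $0$, it is enough to prove the symmetric inequality $L(G) - R(G) \le 2\deg(G) + \deg_2(G)$. Consider an optimal Left opening at a vertex $v$: by \cref{lem:isolated} it produces $\deg(v) \le \deg(G)$ blue-tinted isolated vertices (contributing the integer $\deg(v)$ to the mean) plus a tinted residual $G_v = G - N[v]$ whose degree and second-degree parameters are no larger than those of $G$. I would pair this with Right's best response at some vertex $w$ and argue via an exchange argument that we may assume $d(v,w) \le 2$: if $w$ lay at distance $\ge 3$ from $v$, the two reservations would act on disjoint portions of the board, and Right could do at least as well by attacking inside $N(v)\cup N(N(v))$ to cancel the tinted bonus Left just acquired.

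Once the pairing is localised, the accounting is as follows. Left reserves the $\deg(v) \le \deg(G)$ vertices of $N(v)$; Right reserves the $\deg(w) \le \deg(G)$ vertices of $N(w)$; and the only "collision" between the two reservations is the overlap $N(v)\cap N(w)$, every element of which is a common neighbour at distance $2$ from one of $v,w$, so this overlap is bounded by $\deg_2(G)$. Combining these local contributions with the inductive bound applied to the residual board (since $\deg$ and $\deg_2$ are monotone under play) gives the required inequality after halving to convert the $L-R$ bound into a temperature bound.

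The hardest step will be making the exchange argument rigorous. Temperature is not additive across disjunctive sums --- only the mean is --- so the induction cannot be carried out by summing bounds over components of $G_v$. Moreover, reversible options in the canonical form allow play that does not fit the clean "Left at $v$, Right at $w$" template, so one must verify that no asymmetric line (for example, Left following with a second distant move before Right responds, as in fact happens in the main Theorem \ref{thm-canonical}) breaks the accounting. A secondary technical issue is defining $\deg_2$ correctly on tinted boards: a one-colour-tinted vertex is still playable for the other player and must contribute to degrees, but its presence alters the "second-round" reservations. Before committing to this route, I would verify the conjecture computationally on small graphs using the genetic-algorithm apparatus already described in the introduction, and in particular check the caterpillar family $C(n+1,n,n+1)$, for which $\deg + \tfrac{1}{2}\deg_2 = (n+2) + \tfrac{1}{2}(2n+2) = 2n+3$ lies only two above the actual temperature $2n+1$, giving some evidence that the bound is of the correct order.
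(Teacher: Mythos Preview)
This statement is a \emph{conjecture}: the paper explicitly does not prove it, offering only the one-paragraph heuristic that a first move reserves $\deg(v)$ neighbours and that in subsequent alternating play the mover should secure roughly half of the second neighbourhood. So there is no proof in the paper to compare your proposal against; you are attempting to settle an open problem.

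Your plan also contains a genuine error at its core. The identity $t(G)=\tfrac{1}{2}\bigl(L(G)-R(G)\bigr)$ is false in general: it holds for a simple switch $\{a\mid b\}$ but fails once the thermograph walls acquire vertical segments. The paper's own \cref{fig:thermograph-multiple-options} already furnishes a counterexample: for $G=\big\{-1,\{2\mid-2\}\mid-8\big\}$ one has $L(G)=-1$ and $R(G)=-8$, so $\tfrac{1}{2}(L-R)=\tfrac{7}{2}$, yet the thermograph gives $t(G)=4$. Crucially the failure is in the wrong direction ($t>(L-R)/2$), so bounding $L(G)-R(G)$ would not bound the temperature, and the inductive scheme built on that reduction collapses. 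You already flag the remaining obstacles yourself---non-additivity of temperature over disjunctive sums, the unjustified exchange argument localising Right's reply to $N(v)\cup N(N(v))$, and the two-consecutive-Left-move lines that genuinely occur in the canonical form of \cref{thm-canonical}---and none of these has an evident repair. As written, the proposal is a heuristic of the same flavour as the paper's own motivating paragraph, not a proof.
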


For example, for $G=C(n+1,n,n+1)$ we have $\deg_2(G)=2n+2$ and the temperature satisfies $t(G)=2n+1< (n+1)+(n+1)=\deg(G)+\deg_2(G)/2$.

We believe that this conjecture may be correct due to the following: The first move in any position will tint all adjacent vertices.
When continuing with alternating moves, the first player will get to tint about half of the vertices at distance 2.
Thus each player can reserve $\deg(G)+\deg_2(G)/2$ many vertices for themselves.

Although all positions we have computed temperatures for satisfy this bound, we have not been able to compute temperatures where the third degree is significantly larger than the first and second degrees, which is likely important since all positions we have found where $t(G)>\deg(G)$ have a large second degree compared to the first degree.
Thus computing temperatures for such positions will likely reveal whether \cref{conjecture} is correct or whether a bound on the temperature instead would be an infinite series in the $n$th degrees.

\medskip

Another question of interest, posed by Aaron Siegel after a talk presenting these results, is whether for \snort $t(G)/\deg(G)$ is bounded.
The caterpillars we considered satisfy $t(G)/\deg(G)\leq 2$.
Thus it would be interesting to see whether this ratio can be larger.

\medskip

Finally, we have shown that $t(G)-\deg(G)$ can take on any non-negative integer value.
The temperature of a position can be any dyadic rational in $[-1,\infty)$.
Our third proposed problem is: for a given positive dyadic rational $d$, find a \snort position $G$ such that $d=t(G)-\deg(G)$.


\begin{thebibliography}{10}

\bibitem{Arroyo1998}
Edward Arroyo.
\newblock {\em Dawson's chess, snort on graphs and graph involutions}.
\newblock ProQuest LLC, Ann Arbor, MI, 1998.
\newblock Thesis (Ph.D.)--City University of New York.

\bibitem{Berlekamp2019}
E.R. Berlekamp.
\newblock Temperatures of games and coupons.
\newblock In Urban Larsson, editor, {\em Games of No Chance 5}, volume~70 of
  {\em Math. Sci. Res. Inst. Publ.}, pages 21--33. Cambridge Univ. Press,
  Cambridge, 2019.

\bibitem{BerlekampCG2004}
E.R. Berlekamp, J.H. Conway, and R.K. Guy.
\newblock {\em Winning ways for your mathematical plays. {V}ol. 1}.
\newblock AK Peters Ltd., Wellesley, MA, second edition, 2004.

\bibitem{BrownCHMMNS2019}
J.I. Brown, D.~Cox, A.~Hoefel, N.~McKay, R.~Milley, R.J. Nowakowski, and A.A.
  Siegel.
\newblock A note on polynomial profiles of placement games.
\newblock In Urban Larsson, editor, {\em Games of No Chance 5}, volume~70 of
  {\em Mathematical Sciences Research Institute Publications}, pages 249--265.
  Cambridge University Press, 2019.

\bibitem{harary1973number}
Frank Harary and Allen~J Schwenk.
\newblock The number of caterpillars.
\newblock {\em Discrete Mathematics}, 6(4):359--365, 1973.

\bibitem{HuntemannPhD}
S.~Huntemann.
\newblock {\em The {C}lass of {S}trong {P}lacement {G}ames: {C}omplexes,
  {V}alues, and {T}emperature}.
\newblock PhD thesis, Dalhousie University, 2018.

\bibitem{HuntemannN2022}
Svenja Huntemann and Lexi Nash.
\newblock The polynomial profile of distance games on paths and cycles.
\newblock {\em Integers}, 22:\#G4, 2022.

\bibitem{HuntemannNP2021}
Svenja Huntemann, Richard~J. Nowakowski, and Carlos~Pereira dos Santos.
\newblock Bounding game temperature using confusion intervals.
\newblock {\em Theoret. Comput. Sci.}, 855:43--60, 2021.

\bibitem{Kakihara2010}
Keiko Kakihara.
\newblock {\em Snort: {A} combinatorial game}.
\newblock Thesis Digitization Project, 2010.
\newblock Thesis (MA)--California State University, San Bernardino.

\bibitem{Schaefer1978}
Thomas~J. Schaefer.
\newblock On the complexity of some two-person perfect-information games.
\newblock {\em Journal of Computer and System Sciences}, 16(2):185--225, 1978.

\bibitem{siegel2013combinatorial}
A.N. Siegel.
\newblock {\em Combinatorial Game Theory}.
\newblock Graduate studies in mathematics. American Mathematical Society, 2013.

\end{thebibliography}
\end{document}